\documentclass[12pt]{article}
\usepackage{epsfig}
\usepackage{a4wide}
\usepackage{latexsym}
\usepackage{amssymb}
\usepackage{subfigure}
\usepackage{amsmath}
\usepackage{amscd}
\usepackage{tabularx}

\usepackage{graphicx} 


\newtheorem{theorem}{Theorem}
\newtheorem{proposition}[theorem]{Proposition}
\newtheorem{definition}{Definition}

\newtheorem{corollary}[theorem]{Corollary}
\newtheorem{conjecture}{Conjecture}

\newtheorem{observation}[theorem]{Observation}

\newcommand{\ora}[1]{\overrightarrow{#1}}
\newcommand{\eye}{\mathit{eye}}
\newcommand{\elb}{\mathit{elb}}

\newenvironment{proof}{\par \noindent {\bf Proof}.\ }{\hfill$\Box$ \par
\vspace{11pt}}

\begin{document}

\title{{\bf Covering line graphs with equivalence
    relations}\thanks{The three authors were supported by the European
    project \textsc{ist fet Aeolus}, contract number IP-FP6-015964,
    and hosted by the Institute for Theoretical Computer Science in
    Charles University, Prague, Czech Republic while they were working
    on this problem.}}

\author{Louis Esperet\thanks{CNRS, Laboratoire G-SCOP, Grenoble,
    France.  E-mail: {\tt louis.esperet@g-scop.fr}.}\and John
  Gimbel\thanks{Mathematical Sciences, University of Alaska,
    Fairbanks, AK, USA. E-mail: {\tt ffjgg@uaf.edu}.}\and Andrew
  King\thanks{Dept. of Industrial Engineering and Operations Research,
    Columbia University, NYC, USA. E-mail: {\tt
      aking6@cs.mcgill.ca}.}}

\date{}
\maketitle

\begin{abstract}

  An equivalence graph is a disjoint union of cliques, and the
  equivalence number $\mathit{eq}(G)$ of a graph $G$ is the minimum
  number of equivalence subgraphs needed to cover the edges of $G$.
  We consider the equivalence number of a line graph, giving improved
  upper and lower bounds: $\frac 13 \log_2\log_2 \chi(G) <
  \mathit{eq}(L(G)) \leq 2\log_2\log_2 \chi(G) + 2$.  This disproves a
  recent conjecture that $\mathit{eq}(L(G))$ is at most three for
  triangle-free $G$; indeed it can be arbitrarily large.

  To bound $\mathit{eq}(L(G))$ we bound the closely-related invariant $\sigma(G)$, which is the minimum number of orientations of $G$ such that for any two edges $e,f$ incident to some vertex $v$, both $e$ and $f$ are oriented out of $v$ in some orientation.  When $G$ is triangle-free, $\sigma(G)=\mathit{eq}(L(G))$.  We prove that even when $G$ is triangle-free, it is NP-complete to decide whether or not $\sigma(G)\leq 3$.\\

\emph{Keywords}: Equivalence covering, clique chromatic index, line graph, orientation covering, eyebrow number.
\end{abstract}

\section{Introduction}

Given a binary relation $\sim$ over a set $A$, it is natural to
consider expressing $\sim$ as a union of $k$ transitive subrelations
for the smallest possible value of $k$.  If $\sim$ is reflexive and
symmetric, each subrelation is an equivalence relation and we can
restate the problem as a graph covering problem: We seek to cover the
edges of a graph $G$ with $k$ {\em equivalence subgraphs}, i.e.\
subgraphs each of which is a disjoint union of cliques.  This is an
{\em equivalence covering} of $G$.  The minimum $k$ for which this is
possible is the {\em equivalence number} of $G$, denoted
$\mathit{eq}(G)$.

The equivalence covering number was introduced by Duchet in 1979
\cite{Duc79}.  Not surprisingly, it is NP-complete to compute, even
for split graphs \cite{BK95}.  In \cite{Alo86}, Alon proved upper and
lower bounds for general graphs:

\begin{theorem}
  Let $G$ be a graph on $n$ vertices with minimum degree $\delta$, and
  let $cc(G)$ be the minimum number of cliques needed to cover the
  edges of $G$. Then
$$\log_2n - \log_2(n-\delta-1) \leq \mathit{eq}(G) \leq cc(G) \leq 2e^2(n-\delta)^2\log_e n.$$
\end{theorem}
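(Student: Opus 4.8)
The statement bundles three inequalities, and I would establish them separately, in increasing order of difficulty. The middle inequality $\mathit{eq}(G)\le cc(G)$ is immediate: a single clique is already an equivalence subgraph (a disjoint union of one clique), so any covering of $E(G)$ by $cc(G)$ cliques is in particular an equivalence covering. Nothing more is needed there.

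For the lower bound, I would fix an optimal equivalence covering $E_1,\dots,E_k$ with $k=\mathit{eq}(G)$ and record for each vertex $v$ its \emph{signature} $\Phi(v)=(\phi_1(v),\dots,\phi_k(v))$, where $\phi_i(v)$ names the clique of $E_i$ containing $v$. The defining property of such a covering is that two distinct vertices are adjacent in $G$ exactly when their signatures agree in at least one coordinate; equivalently, $u$ and $v$ are non-adjacent precisely when $\Phi(u)$ and $\Phi(v)$ differ in \emph{every} coordinate. Since $G$ has minimum degree $\delta$, every vertex has at most $n-1-\delta$ non-neighbours, so each signature differs-everywhere from at most $n-1-\delta$ others. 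The goal is to convert this into $2^{k}\ge n/(n-\delta-1)$, i.e.\ $k\ge\log_2 n-\log_2(n-\delta-1)$, by a counting argument on signatures. The delicate point — and what I expect to be the conceptual obstacle — is that a naive union bound (each relation can only create so many agreements) loses a logarithm; one must genuinely exploit the product structure of the ``differs in every coordinate'' condition. That this bound is essentially tight is visible on the cocktail-party graph $K_{m\times 2}$, where $n-\delta-1=1$ and the bound reads $\mathit{eq}(G)\ge\log_2 n$.

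The upper bound $cc(G)\le 2e^2(n-\delta)^2\log_e n$ is the main quantitative work, and I would prove it by a probabilistic clique covering. Write $D=n-1-\delta=\Delta(\overline G)$, so every vertex has at most $D$ non-neighbours. Form a random clique of $G$ as follows: include each vertex independently with probability $p$ to obtain $S_0$, then delete from $S_0$ every vertex having a neighbour in $\overline G$ inside $S_0$. The surviving set $S$ is independent in $\overline G$, hence a clique of $G$. For a fixed edge $uv$ of $G$, the set $F=(N_{\overline G}(u)\cup N_{\overline G}(v))\setminus\{u,v\}$ has size at most $2D$ and is disjoint from $\{u,v\}$ (as $uv\notin E(\overline G)$); if $u,v\in S_0$ and $F\cap S_0=\emptyset$ then both survive, so $uv$ is covered by $S$. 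Thus $uv$ is covered with probability at least $p^2(1-p)^{2D}$.

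It then remains to optimise and apply a union bound. Maximising $p^2(1-p)^{2D}$ gives $p=1/(D+1)$, for which the covering probability is at least about $e^{-2}/(n-\delta)^2$. Taking $m$ independent random cliques, the probability that a given edge is never covered is at most $\bigl(1-e^{-2}/(n-\delta)^2\bigr)^m\le \exp\!\bigl(-m\,e^{-2}/(n-\delta)^2\bigr)$, and since $G$ has fewer than $n^2$ edges, the union bound leaves positive probability that all edges are covered as soon as $m\,e^{-2}/(n-\delta)^2>2\log_e n$, that is $m>2e^2(n-\delta)^2\log_e n$. Hence some family of at most $\lceil 2e^2(n-\delta)^2\log_e n\rceil$ cliques covers every edge, giving $cc(G)\le 2e^2(n-\delta)^2\log_e n$. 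The only real care here is the optimisation of $p$, which is exactly what pins down the constant $2e^2$.
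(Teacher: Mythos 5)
You cannot really be compared against the paper here: the paper states this result as background and gives no proof, citing Alon \cite{Alo86}. Judged on its own merits, your proposal is two-thirds complete. The middle inequality is indeed immediate, and your probabilistic proof of $cc(G)\leq 2e^2(n-\delta)^2\log_e n$ is correct and is in essence Alon's original deletion argument; the only polish needed is bookkeeping: with $D=n-1-\delta$ and $p=1/(D+1)$ one has $p^2(1-p)^{2D}\geq e^{-2}/(n-\delta)^2$ exactly, via $(1-\tfrac{1}{D+1})^{D}\geq e^{-1}$, so your ``at least about'' can be upgraded to ``at least'', and since the number of edges is at most $\binom{n}{2}<n^2/2$ the union bound leaves enough slack to absorb rounding $m$ up to an integer.

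The lower bound is a genuine gap, and you flag it yourself: after setting up signatures you announce an unspecified ``counting argument on signatures'' at exactly the step that carries all the difficulty. Worse, the reduction as you set it up cannot be completed: the hypothesis you extracted --- every vertex has at most $D=n-\delta-1$ differs-everywhere partners --- does not imply $2^k\geq n/D$. Take $G=K_n$ minus one edge $uv$: then $\delta=n-2$, so $D=1$ and the bound would force $\mathit{eq}(G)\geq\log_2 n$, yet the two cliques on $V\setminus\{u\}$ and $V\setminus\{v\}$ cover all edges of $G$, so $\mathit{eq}(G)=2$. (This is in fact a counterexample to the inequality exactly as quoted; the intended statement needs an additional hypothesis, e.g.\ that every vertex has at least one non-neighbour, as in the complement of a perfect matching, where the bound is essentially tight --- your cocktail-party sanity check.)

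The missing idea, under such a hypothesis, is the product-of-two-colorings trick that this very paper uses in the proof of Theorem~\ref{thm:tightbound}: two-color the classes of each $E_i$ independently and uniformly at random, giving each vertex a string $b(v)\in\{0,1\}^k$. Adjacent vertices share a class in some coordinate, so their strings are never coordinatewise complementary; conversely, if $b(u)=\overline{b(v)}$ then $u$ and $v$ differ in every coordinate and are non-adjacent, and for a fixed non-adjacent pair this event has probability $2^{-k}$. Writing $B_s$ for the set of vertices with string $s$, whenever $B_s$ and $B_{\overline{s}}$ are both nonempty each has size at most $D$; comparing $\mathbb{E}\,|\{v: B_{\overline{b(v)}}\neq\emptyset\}|\geq n2^{-k}$ with the deterministic bound $2^kD$ yields $n\leq 4^kD$, i.e.\ $k\geq\tfrac12(\log_2 n-\log_2(n-\delta-1))$ --- the right shape, though recovering the exact constant of the stated bound requires Alon's sharper accounting.
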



Observe that if $G$ is triangle-free, then every equivalence subgraph
of $G$ is a matching.  It follows that in this case an equivalence
covering of $G$ is actually an edge coloring, and that
$\mathit{eq}(G)$ is equal to the chromatic index $\chi'(G)$.  Thus
equivalence coverings can also be thought of as a generalization of
edge colorings.  In fact, McClain \cite{McC08+} formulated them
seemingly independently of earlier work in precisely this context,
calling $\mathit{eq}(G)$ the {\em clique chromatic index} of $G$.

In this paper we address the problem, first studied by McClain, of
bounding the equivalence number of line graphs.  For a graph $G$, the
line graph $L(G)$ of $G$ has a vertex corresponding to each edge of
$G$, and two vertices of $L(G)$ are adjacent precisely if the two
corresponding edges of $G$ share an endpoint (i.e.\ are {\em
  incident})\footnote{We need only consider line graphs of simple
  graphs: If two vertices $u$ and $v$ of $G$ have the same closed
  neighborhood, it is easy to see that
  $\mathit{eq}(G)=\mathit{eq}(G-v)$.  Thus we can easily reduce the
  problem for line graphs of multigraphs.}. McClain proved that for a
graph $G$ on $n$ vertices, $\mathit{eq}(L(G))\leq 4\left\lceil
  \frac{\log_en}{\log_e 12}\right\rceil$, and asked if this bound
could be improved \cite{McC08}.  We will prove that
$$\tfrac 13\left(\lceil \log_2\log_2 \chi(G)\rceil + 1\right) \le \mathit{eq}(L(G)) \le 2\left( \lceil \log_2 \log_2  \chi(G) \rceil +1\right),$$
where $\chi(G)$ is the chromatic number of $G$.  We will actually
prove a slightly better (but more unwieldy) lower bound.  Since
triangle-free graphs can have arbitrarily high chromatic number, our
lower bound disproves a recent conjecture of McClain~\cite{McC08+}
stating that any triangle-free graph $G$ has $\mathit{eq}(L(G))\leq
3$.

In order to bound $\mathit{eq}(L(G))$ we consider a closely-related
invariant of $G$, namely $\sigma(G)$.  In the next section we
introduce $\sigma(G)$ and prove that it is close to
$\mathit{eq}(L(G))$.  In Section 3 we relate $\sigma(G)$ to two other
interesting invariants arising from orientations.  In Section 4 we
will briefly discuss tightness and complexity concerns, in particular
proving that it is NP-complete to decide whether or not
$\mathit{eq}(L(G))\leq 3$, even if $G$ is triangle-free.

\section{Covering incidence pairs with orientations}


Equivalence subgraphs of a line graph $L(G)$ are intimately related to
orientations of $G$.  We begin the section by explaining why this is
so.

For every vertex $v$ of $G$, there is a clique $C_v$ of $L(G)$
corresponding to those edges of $G$ incident to $v$.  Every vertex of
$L(G)$ is in exactly two of these cliques, since every edge of $G$ has
two endpoints.  This fact invites a natural mapping from the set of
orientations of $G$ to the set of equivalence subgraphs of $L(G)$.
Given an orientation $\overrightarrow G$ we define the clique
$\overrightarrow{C_v}$ of $L(G)$ as the set of vertices of $L(G)$
corresponding to the out-edges of $v$.  For $u,v\in V(G)$ the cliques
$\overrightarrow{C_u}$ and $\overrightarrow{C_v}$ are disjoint, so the
disjoint union of $\overrightarrow{C_v}$ for all $v\in V(G)$ is an
equivalence subgraph of $L(G)$ corresponding to the orientation
$\overrightarrow G$.  We call this equivalence subgraph of $L(G)$ the
{\em analogue} of $\overrightarrow G$.

Using this idea, we can construct an equivalence covering of $L(G)$
using orientations of $G$.  Let $\overrightarrow{G_1},
\overrightarrow{G_2}, \ldots, \overrightarrow{G_k}$ be a set of
orientations of $G$ with the following property: For every vertex $u$
of $G$ with neighbors $v$ and $w$, there is an $i$ such that
$\overrightarrow{uv}, \overrightarrow{uw}\in \ora{E}(\ora{G_i})$.  In
other words, for every $e,f\in E(G)$ sharing an endpoint $v$, some
orientation $\ora{G_i}$ directs both $e$ and $f$ out of $v$.  We call
such a set of orientations an {\em orientation covering} of $G$, and
accordingly define the {\em orientation covering number} of $G$,
denoted $\sigma(G)$, as the size of a minimum orientation covering.
Figure 1 shows an orientation covering of size three for $K_4$, along
with a corresponding equivalence covering of size three for $L(K_4)$.

\begin{figure}
\begin{centering}
\includegraphics[scale=0.9]{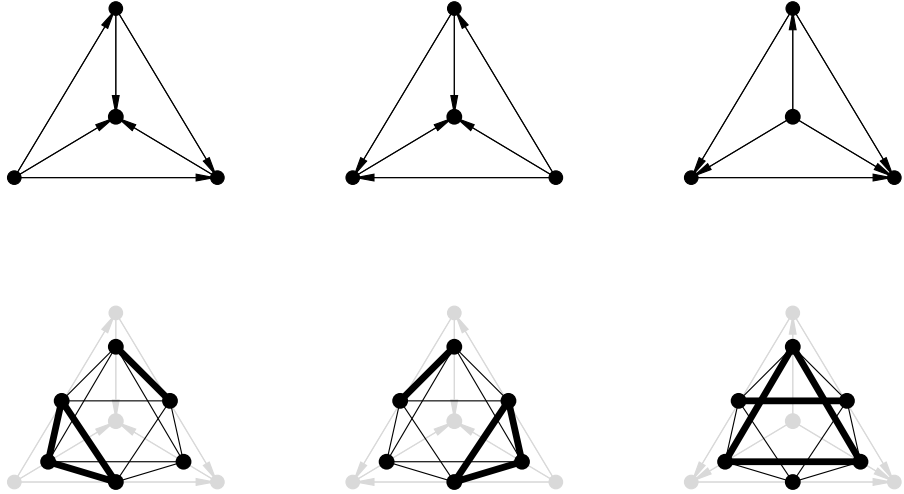}
\caption{An orientation covering of size three for $K_4$, and the
  corresponding equivalence subgraphs in $L(K_4)$.}\label{fig:k4}
\end{centering}
\end{figure}

Noting the discussion above, we can make an easy observation:

\begin{observation}\label{obs:obsleq}
For any graph $G$, we have $\mathit{eq}(L(G))\leq \sigma(G)$.
\end{observation}

The two invariants are actually equal for triangle-free $G$:

\begin{proposition}\label{prop:trifree}
For any triangle-free graph $G$, we have $\mathit{eq}(L(G))= \sigma(G)$.
\end{proposition}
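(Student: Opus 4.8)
The plan is to prove the reverse inequality $\sigma(G) \le \mathit{eq}(L(G))$, since Observation~\ref{obs:obsleq} already supplies $\mathit{eq}(L(G)) \le \sigma(G)$. The structural fact I would lean on is that when $G$ is triangle-free, every clique of $L(G)$ is a \emph{star}, i.e.\ is contained in $C_v$ for some vertex $v$ of $G$. Indeed, a clique of $L(G)$ is a set of pairwise-incident edges of $G$, and any such set either shares a common endpoint or forms a triangle in $G$; triangle-freeness rules out the latter. Moreover, a clique $K$ of $L(G)$ with $|K| \ge 2$ determines its common vertex $v$ uniquely, since two distinct edges of $G$ meet in at most one vertex.

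Starting from an equivalence covering $S_1, \dots, S_k$ of $L(G)$ with $k = \mathit{eq}(L(G))$, I would build an orientation covering $\ora{G_1}, \dots, \ora{G_k}$ as follows. Each $S_i$ is a disjoint union of cliques; for each clique $K$ of $S_i$ with $|K| \ge 2$, let $v$ be its unique common vertex and orient every edge of $K$ out of $v$ in $\ora{G_i}$. Because $S_i$ is a \emph{disjoint} union of cliques, every edge of $G$ lies in at most one such clique, so these instructions never conflict; the remaining edges of $G$ (those in singleton cliques or in no clique of $S_i$) may be oriented arbitrarily. This yields a well-defined orientation $\ora{G_i}$ for each $i$.

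It then remains to verify the covering property. Given edges $e,f$ sharing an endpoint $v$, the pair $\{e,f\}$ is an edge of $L(G)$, hence lies in some $S_i$; since $S_i$ is a disjoint union of cliques, $e$ and $f$ must belong to a common clique $K$ of $S_i$ with $|K| \ge 2$. The unique common vertex of $K$ is necessarily $v$ (the only shared endpoint of $e$ and $f$), so by construction both $e$ and $f$ are directed out of $v$ in $\ora{G_i}$. Thus $\ora{G_1}, \dots, \ora{G_k}$ is an orientation covering of size $k$, giving $\sigma(G) \le \mathit{eq}(L(G))$ and hence equality.

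The argument is structurally short, and the only place triangle-freeness is genuinely used is the claim that every clique of $L(G)$ is a star. The main point to handle carefully is the well-definedness of each $\ora{G_i}$, that is, checking that no edge receives conflicting orientations; this rests precisely on the cliques within a single $S_i$ being vertex-disjoint together with the uniqueness of the common vertex of each clique of size at least two. Singleton cliques and uncovered edges cause no difficulty, as they may be oriented freely.
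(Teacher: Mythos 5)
Your proposal is correct and follows essentially the same route as the paper: both arguments rest on the fact that triangle-freeness forces every clique of an equivalence subgraph of $L(G)$ to be a star $K\subseteq C_v$, and both orient the corresponding edges of $G$ out of the common vertex $v$ (the paper phrases this per vertex $w$ of $L(G)$ via the containment $N_H(w)\subseteq C_u$ or $N_H(w)\subseteq C_v$, while you phrase it per clique, with an explicit well-definedness check that the paper leaves implicit). No gaps; the two write-ups differ only in bookkeeping.
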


\begin{proof}
  Let $H$ be an equivalence subgraph of $L(G)$, and consider a vertex
  $w$ of $L(G)$ corresponding to the edge $uv\in E(G)$.  Since $G$ is
  triangle-free, the neighborhood of $w$ in $H$ is contained in either
  $C_u$ or $C_v$.  We construct an orientation of $G$ from $H$ as
  follows: For every such $w$, $u$, and $v$, orient $uv$ towards $v$
  if $N_{H}(w)\subseteq C_u$, and orient it towards $u$ otherwise.
  For $w$ having no neighbor in $H$, orient $uv$ arbitrarily.

  If we construct an orientation of $G$ in this way for every
  equivalence subgraph of an equivalence covering of $L(G)$, it is
  easy to confirm that the result is an orientation covering of $G$.
  The result follows.
\end{proof}

The invariants $\sigma(G)$ and $\mathit{eq}(L(G))$ are not always
equal.  If $G$ is a triangle with a pendant vertex, then $\sigma(G)=3$
and $\mathit{eq}(L(G))=2$.  We suspect that this may be the worst
case, i.e.\ that $\sigma(G)\leq \mathit{eq}(L(G))+1$ for all connected
$G$, unless $G$ is a triangle, in which case $\mathit{eq}(L(G))=1$ and
$\sigma(G)=3$.  For now we simply show that they are within a
multiplicative constant of one another.

\begin{theorem}\label{th:lmlower}
For any graph $G$, $$\mathit{eq}(L(G))\leq \sigma(G) \le 3\,
\mathit{eq}(L(G)).$$
\end{theorem}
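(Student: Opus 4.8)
The left inequality is exactly Observation~\ref{obs:obsleq}, so the work lies entirely in the upper bound $\sigma(G)\le 3\,\mathit{eq}(L(G))$. The plan is to start from an optimal equivalence covering $H_1,\dots,H_k$ of $L(G)$, where $k=\mathit{eq}(L(G))$, and to convert each equivalence subgraph $H_i$ into a \emph{bounded} number of orientations of $G$ in such a way that the resulting orientations together form an orientation covering. Recall that in the triangle-free setting of Proposition~\ref{prop:trifree} a single orientation sufficed per subgraph; I would aim to show that in general three orientations per subgraph always suffice.

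The key structural input is the classical fact that a clique of $L(G)$, being a set of pairwise incident edges of $G$, is either a \emph{star} (a set of edges through one common vertex) or a \emph{triangle} (the three edges of a triangle of $G$); in particular every clique of size at least four is automatically a star. For a star clique centered at $v$ I would simply orient all its edges out of $v$, exactly as in Proposition~\ref{prop:trifree}: any two distinct edges $e,f$ of the star meet in only one vertex, so if they share a common endpoint it must be $v$, and both are then directed out of it. The genuine obstacle is the triangle case. A single orientation of a triangle $xyz$ makes at most one of $x,y,z$ a source of its two incident triangle-edges, so one orientation can cover at most one of the three incidence pairs $\{xy,xz\}$, $\{xy,yz\}$, $\{xz,yz\}$. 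This is precisely the phenomenon behind the triangle-with-a-pendant example in which $\sigma(G)>\mathit{eq}(L(G))$, and it is what forces the multiplicative factor of three.

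Accordingly, from each $H_i$ I would build three orientations $\ora{G_i^1},\ora{G_i^2},\ora{G_i^3}$ as follows. Fix an arbitrary ordering $(x,y,z)$ of the three vertices of each triangle clique of $H_i$; in $\ora{G_i^{\,j}}$ direct the two triangle-edges at the $j$-th listed vertex out of that vertex (orienting the remaining triangle-edge arbitrarily), while in all three orientations direct every star clique out of its center and orient every remaining edge arbitrarily. Because $H_i$ is a disjoint union of cliques, its cliques are pairwise edge-disjoint in $G$, so each edge of $G$ receives at most one constraint within a given $\ora{G_i^{\,j}}$ and the orientations are well defined. To verify that these $3k$ orientations form an orientation covering, take any two edges $e,f$ sharing an endpoint $v$: the pair $ef$ is an edge of $L(G)$, hence $e$ and $f$ lie in a common clique $K$ of some $H_i$. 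If $K$ is a star then its center must be $v$, so all three orientations derived from $H_i$ send $e$ and $f$ out of $v$; if $K$ is a triangle then $v$ is one of its three vertices, and the orientation in which $v$ is the listed source sends both $e$ and $f$ out of $v$. Either way some orientation covers the pair, giving $\sigma(G)\le 3k=3\,\mathit{eq}(L(G))$. The only real subtlety to nail down is the triangle bookkeeping — confirming that the three source-assignments genuinely cover all three incidence pairs of every triangle clique simultaneously and never clash with the star constraints — which the edge-disjointness of the cliques of $H_i$ resolves.
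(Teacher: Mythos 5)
Your proposal is correct and follows essentially the same route as the paper: both proofs use the dichotomy that each clique of an equivalence subgraph of $L(G)$ is a star $C_v$ or a triangle of $G$, exploit the vertex-disjointness of cliques in $L(G)$ (hence edge-disjointness of the triangles in $G$) to orient all triangles of one subgraph consistently with three orientations covering each triangle's three incidence pairs, and extend each to $G$ by directing star cliques out of their centers as in Proposition~\ref{prop:trifree}. Your verification of the covering property and of well-definedness matches the paper's argument, so there is nothing to add.
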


\begin{proof}
  Consider $k$ equivalence subgraphs $R_1, \ldots, R_k$ of $L(G)$
  covering all the edges of $L(G)$. Using
  Observation~\ref{obs:obsleq}, we only need to prove that $\sigma(G)
  \le 3k$. Take $i \in [k]$.  Each component of $R_i$ is either
  contained in $C_v$ for some $v\in V(G)$, or corresponds to the edges
  of a triangle in $G$.  Let $T_i$ be the set of triangles of $G$
  corresponding to cliques in $R_i$.  Observe that the triangles of
  $T_i$ must be edge-disjoint, since otherwise the corresponding
  triangles of $R_i$ would not be vertex-disjoint.  Consequently there
  exist three orientations $\overrightarrow{T_i}^1,
  \overrightarrow{T_i}^2, \overrightarrow{T_i}^3$ of the edges of
  $T_i$ such that for any triple $(u,v,w)$ of vertices of $G$
  corresponding to a triangle of $T_i$, the edges are oriented
  $\overrightarrow{uv}$ and $\overrightarrow{uw}$ in one of the
  orientations.

  We extend each $\overrightarrow{T_i}^j$ to an orientation
  $\ora{G_i}^j$ of $H$ as in the proof of Proposition
  \ref{prop:trifree}.  That is, for every $w\in V(R_i)$ corresponding
  to an edge $uv$ of $G$, we orient $uv$ towards $v$ if
  $N_{R_i}(w)\subseteq C_u$.  If $w$ has no neighbor in $R_i$, orient
  $uv$ arbitrarily. This construction gives us an orientation covering
  $\{\ora{G_i}^j \mid i\in [k],\ j\in [3] \}$, so $\sigma(G)\leq 3k$.
\end{proof}

This proves that $\mathit{eq}(L(G))$ and $\sigma(G)$ are within a
multiplicative constant of one another.  In the next section we prove
that $\sigma(G)$ is within a multiplicative constant of $\log_2 \log_2
\chi(G)$.

\section{Homomorphisms, eyebrows and elbows}


The bounds that we prove in this paper are generally stated in terms
of the chromatic number.  There is a simple justification for this,
which is that $\sigma(G)$ is monotone with respect to
homomorphism\footnote{A homomorphism from a graph $G$ to a graph $H$
  is a function $f:V(G)\rightarrow V(H)$ such that any two adjacent
  vertices of $G$ get mapped to adjacent vertices of $H$.}:

\begin{proposition}\label{prop:sigmahomo}
  Let $G$ and $H$ be graphs such that there is a homomorphism from $G$
  to $H$.  Then $\sigma(G) \leq \sigma(H)$.
\end{proposition}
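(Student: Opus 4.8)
The plan is to transport an optimal orientation covering of $H$ back to $G$ along the homomorphism $f\colon V(G)\to V(H)$. Fix an orientation covering $\ora{H_1},\dots,\ora{H_k}$ of $H$ with $k=\sigma(H)$. Since $f$ is a homomorphism, every edge $uv\in E(G)$ is sent to an edge $f(u)f(v)\in E(H)$ (in particular $f(u)\neq f(v)$, as adjacent vertices map to distinct adjacent vertices), so I can define, for each $i\in[k]$, an orientation $\ora{G_i}$ of $G$ by the pullback rule: orient $uv$ from $u$ to $v$ in $\ora{G_i}$ exactly when $f(u)f(v)$ is oriented from $f(u)$ to $f(v)$ in $\ora{H_i}$. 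This is well defined and assigns each edge of $G$ a single direction, so $\{\ora{G_i}\}_{i\in[k]}$ is a family of $k$ orientations of $G$; it remains only to check that it is an orientation covering.

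To that end, take any vertex $u$ of $G$ and two distinct neighbours $v,w$; I must produce an $i$ with $\ora{uv},\ora{uw}\in\ora{E}(\ora{G_i})$. The clean case is $f(v)\neq f(w)$: then $f(u)f(v)$ and $f(u)f(w)$ are two distinct edges of $H$ sharing the endpoint $f(u)$, so by the covering property of $H$ there is an $i$ with both oriented out of $f(u)$ in $\ora{H_i}$, and the pullback rule then orients both $uv$ and $uw$ out of $u$ in $\ora{G_i}$, as required.

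The step I expect to be the main obstacle is the degenerate case $f(v)=f(w)=:y$ (writing $f(u)=:x$), where the two edges of $G$ collapse onto the single edge $xy$ of $H$. The pullback then orients $uv$ and $uw$ identically, so both point out of $u$ in $\ora{G_i}$ exactly when $xy$ points out of $x$ in $\ora{H_i}$, and I must guarantee that this occurs for some $i$. When $\deg_H(x)\geq 2$ there is nothing to do: choosing any second neighbour $z$ of $x$ and applying the covering property to the pair $\{xy,xz\}$ yields an orientation with $x\to y$. The delicate point is a degree-one image vertex $x$, for which $xy$ is subject to no covering constraint of its own; here one must check that the covering of $H$ can be taken to direct $xy$ out of $x$ in some orientation. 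This is the heart of the matter and the place where any hypotheses on $H$ enter --- in particular it is automatic whenever $H$ has minimum degree at least two, which is exactly the situation in the intended application with $H=K_{\chi(G)}$. Granting this, the family $\{\ora{G_i}\}_{i\in[k]}$ covers every incidence pair of $G$, and we conclude $\sigma(G)\leq k=\sigma(H)$.
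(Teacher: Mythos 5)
Your construction is exactly the paper's proof: the paper defines the same pullback orientations $\ora{G_i}$, with $\ora{uv}\in\ora{E}(\ora{G_i})$ precisely if $\ora{f(u)f(v)}\in\ora{E}(\ora{H_i})$, and then simply declares the covering property ``straightforward to confirm,'' omitting all details. Where you differ is in actually performing the verification, and the degenerate case you isolate ($f(v)=f(w)$, with $f(u)$ of degree one in $H$) is genuinely the only nontrivial point. Whether it is a real obstacle depends on how one reads the paper's definition of an orientation covering. If ``for every $e,f\in E(G)$ sharing an endpoint $v$'' is read with $e=f$ allowed, so that every edge must be directed out of each of its endpoints in some orientation, then the collapsed case is immediate --- apply the covering property of $H$ to the single edge $xy$ at $x$ --- and no hypothesis on $H$ is needed. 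If instead only pairs of distinct incident edges are constrained (which is what the rest of the paper tacitly assumes: Proposition~\ref{prop:trifree} applied to a star forces $\sigma(K_{1,n})=\mathit{eq}(K_n)=1$, and the proof of Theorem~\ref{thm:tighter} explicitly invokes minimum degree two to rule out $o(v,uv)=\emptyset$), then your worry exposes a genuine boundary defect in the unrestricted statement: $P_3$ maps homomorphically onto $K_2$, yet under that reading $\sigma(P_3)=1$ while $\sigma(K_2)=0$ vacuously, so the inequality fails for pendant image vertices.

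Your minimum-degree-two patch is correct --- if $\deg_H(x)\ge 2$, any second neighbour $z$ of $x$ and the pair $\{xy,xz\}$ force $xy$ out of $x$ in some orientation --- and it covers every use the paper makes of the proposition: the application is $H=K_{\chi(G)}$ via Corollary~\ref{cor:chi}, which has minimum degree at least two whenever $\chi(G)\ge 3$, and the bipartite case is handled separately in Corollary~\ref{cor:tight}. In short: same approach as the paper, executed more carefully; the one step you leave conditional is precisely the step at which the statement, taken literally under the distinct-pairs reading, can fail, and which the paper's ``we omit the details'' silently skips.
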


\begin{proof}
  Consider a homomorphism $f: V(G)\rightarrow V(H)$ along with a
  minimum orientation covering of $H$.  For each orientation of $H$ we
  define an orientation of $G$ such that $\ora{uv}\in
  \ora{E}(\ora{G})$ precisely if $\ora{f(u)f(v)} \in
  \ora{E}(\ora{H})$.  It is straightforward to confirm that the
  resulting orientations of $G$ form an orientation covering, and we
  omit the details.
\end{proof}

\begin{corollary}\label{cor:chi}
For any graph $G$ with chromatic number $k$, $\sigma(G) \leq \sigma(K_k)$.
\end{corollary}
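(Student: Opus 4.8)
The plan is to derive Corollary~\ref{cor:chi} directly from Proposition~\ref{prop:sigmahomo} by exhibiting a homomorphism from $G$ to a complete graph. The key observation is that a proper coloring of a graph is exactly a homomorphism to a complete graph: if $G$ has chromatic number $k$, then by definition there is a proper $k$-coloring $c:V(G)\to\{1,\ldots,k\}$, meaning adjacent vertices of $G$ receive distinct colors. Identifying the color classes with the vertices of $K_k$, the map $c$ sends any two adjacent vertices of $G$ to two distinct (hence adjacent) vertices of $K_k$, so $c$ is a homomorphism from $G$ to $K_k$.

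Having established the homomorphism $c:G\to K_k$, I would simply invoke Proposition~\ref{prop:sigmahomo} with $H=K_k$ to conclude $\sigma(G)\leq\sigma(K_k)$, which is precisely the statement of the corollary. The whole argument is a one-line application once the coloring-as-homomorphism dictionary is made explicit, so the proof is essentially immediate.

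I do not anticipate any real obstacle here, since the corollary is a routine specialization of the preceding proposition. The only point worth stating carefully is the equivalence between proper $k$-colorings and homomorphisms to $K_k$, which is standard but deserves an explicit sentence for readers who may not have it at hand. In particular, one should note that $\chi(G)=k$ guarantees the existence of a proper $k$-coloring (we need existence, not minimality, so even $\chi(G)\le k$ would suffice), and that distinctness of colors on adjacent vertices translates exactly into adjacency in $K_k$ because every pair of distinct vertices of a complete graph is joined by an edge.
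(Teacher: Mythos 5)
Your proof is correct and matches the paper's intended argument exactly: the paper states the corollary without proof, as an immediate consequence of Proposition~\ref{prop:sigmahomo} via the standard identification of a proper $k$-coloring with a homomorphism to $K_k$, which is precisely the dictionary you spell out.
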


It is not clear whether or not there exists a graph $G$ for which
$\sigma(G)<\sigma(K_{\chi(G)})$.  However, this tightness does hold
for a related invariant which we now introduce.

Consider the following weakening of an orientation covering: Instead
of insisting that any two incident edges are out-oriented from their
shared endpoint in some orientation, we merely insist that in some
orientation they are either both out-oriented or both in-oriented.
This weakening inspires a new invariant.

\begin{definition}
  The elbow number $\elb(G)$ of a graph $G$ is the minimum $k$ for
  which there exist $k$ orientations $\{\ora{G_i} \mid i\in [k]\}$ of
  $G$ with the following property: For any path $u,v,w$ of $G$, there
  is an $i$ such that $u,v,w$ is not a directed path in $\ora{G_i}$.
  Such a collection of orientations is an {\em elbow covering}.
\end{definition}

Our interest in the elbow number comes primarily from two desirable
properties of the invariant.  First and foremost, it is not too far
from the orientation covering number:

\begin{proposition}\label{prop:sigmaelbow}
For any graph $G$, $\elb(G) \leq \sigma(G) \leq 2\elb(G)$.
\end{proposition}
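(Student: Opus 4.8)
The plan is to prove the two inequalities separately, and both reduce to comparing the defining conditions path by path, where throughout a \emph{path} means a length-two path $u,v,w$ centered at $v$.

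For the lower bound $\elb(G)\le\sigma(G)$, I would argue that every orientation covering is automatically an elbow covering. Fix a minimum orientation covering $\{\ora{G_i}\}$ of size $\sigma(G)$ and any path $u,v,w$. By definition there is an $i$ with $\ora{vu},\ora{vw}\in\ora{E}(\ora{G_i})$, i.e.\ both edges point out of $v$. Then $v$ is a source on this path in $\ora{G_i}$, so $u,v,w$ is certainly not a directed path there, which is exactly the elbow condition. Hence the same orientations form an elbow covering and $\elb(G)\le\sigma(G)$.

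For the upper bound $\sigma(G)\le 2\,\elb(G)$, the idea is to take an elbow covering and \emph{double} it by adjoining the reverse of each orientation. Let $\{\ora{G_i}\mid i\in[k]\}$ be a minimum elbow covering with $k=\elb(G)$, and let $\overleftarrow{G_i}$ denote the orientation obtained from $\ora{G_i}$ by reversing every edge. I claim the $2k$ orientations $\{\ora{G_i},\overleftarrow{G_i}\mid i\in[k]\}$ form an orientation covering. Given a path $u,v,w$, the elbow property supplies an $i$ for which $u,v,w$ is not a directed path in $\ora{G_i}$; since the two edges are incident at $v$, this means $v$ is either a source (both edges out) or a sink (both edges in) with respect to $u$ and $w$. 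If $v$ is a source, then $\ora{G_i}$ itself directs both edges out of $v$. If $v$ is a sink, then reversing all edges sends both of them out of $v$, so $\overleftarrow{G_i}$ does the job. Either way some orientation in our collection out-orients both edges at $v$, establishing the orientation-covering property and hence $\sigma(G)\le 2\,\elb(G)$.

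The argument is little more than definition-chasing, so I do not expect a serious obstacle. The one point deserving care is the case analysis in the upper bound: one must observe that failing to be a directed path forces $v$ to be either a local source or a local sink on $\{u,v,w\}$ (there is no third possibility for two incident edges), and that reversing every edge of $\ora{G_i}$ is precisely what converts the sink case into the source case we want. With that observation both inequalities follow at once.
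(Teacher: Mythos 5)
Your proof is correct and takes exactly the paper's approach: the paper proves the lower bound by noting that every orientation covering is an elbow covering, and the upper bound by taking a minimum elbow covering together with the reversal of each of its orientations. Your write-up simply spells out the source/sink case analysis (with the elbow condition read, as the paper intends, symmetrically: the path $u,v,w$ fails to be directed in either direction, so $v$ is a local source or sink) that the paper's two-sentence proof leaves implicit.
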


\begin{proof}
  Clearly $\elb(G) \leq \sigma(G)$ because every orientation covering
  is also an elbow covering.  If we take a minimum elbow covering
  along with the reversal of each of its orientations, we get an
  orientation covering of size at most $2\elb(G)$.
\end{proof}

Second, a straightforward modification of the proof of Proposition
\ref{prop:sigmahomo} tells us that the elbow number is also monotone
under homomorphism:

\begin{proposition}\label{prop:elbowhomo}
  Let $G$ and $H$ be graphs such that there is a homomorphism from $G$
  to $H$.  Then $\elb(G) \leq \elb(H)$.  Consequently $\elb(G)\leq
  \elb(K_{\chi(G)})$.
\end{proposition}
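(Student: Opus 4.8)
The plan is to mimic the proof of Proposition~\ref{prop:sigmahomo}, transporting an elbow covering of $H$ back along the homomorphism $f:V(G)\to V(H)$ and checking that the defining property survives. First I would fix an optimal elbow covering $\{\ora{H_i}\mid i\in[\elb(H)]\}$ of $H$. For each $i$ I pull back the orientation to $G$ by declaring, for every edge $uv\in E(G)$, that $\ora{uv}\in\ora{E}(\ora{G_i})$ precisely when $\ora{f(u)f(v)}\in\ora{E}(\ora{H_i})$. Since $f$ is a homomorphism, adjacent $u,v$ are mapped to adjacent $f(u),f(v)$, so $f(u)f(v)$ is a genuine edge of $H$ and this prescription is well defined; it yields a collection of $\elb(H)$ orientations of $G$.

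Next I would verify that these pulled-back orientations form an elbow covering of $G$. Take any path $u,v,w$ in $G$, so $uv$ and $vw$ are edges. Their images give a walk $f(u),f(v),f(w)$ in $H$ with both $f(u)f(v)$ and $f(v)f(w)$ edges of $H$. The one subtlety is that $f(u)$ and $f(w)$ might coincide; but if $f(u)=f(w)$ then the pullback directions of $uv$ and $vw$ are exactly opposite relative to $v$ in \emph{every} orientation $\ora{G_i}$, so in no $\ora{G_i}$ is $u,v,w$ a directed path, and the elbow condition holds trivially for this path. Otherwise $f(u),f(v),f(w)$ is an honest path in $H$, and the elbow covering of $H$ supplies an index $i$ for which $f(u),f(v),f(w)$ is not a directed path in $\ora{H_i}$; by the defining pullback, $u,v,w$ is then not a directed path in $\ora{G_i}$ either. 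In all cases some orientation breaks the path, so the collection is an elbow covering of $G$ and hence $\elb(G)\leq\elb(H)$.

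Finally, the consequence $\elb(G)\leq\elb(K_{\chi(G)})$ follows immediately: a proper $\chi(G)$-coloring of $G$ is exactly a homomorphism from $G$ to $K_{\chi(G)}$, so the monotonicity just proved applies with $H=K_{\chi(G)}$.

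I expect the main obstacle to be precisely the degenerate case $f(u)=f(w)$ flagged above, which does not arise in Proposition~\ref{prop:sigmahomo} because that invariant only constrains out-orientation of incident pairs rather than directed paths. Once one observes that such a collapsed path is automatically never a directed path in any pullback orientation, the argument goes through verbatim from the $\sigma$ case, which is why the authors describe it as a ``straightforward modification.''
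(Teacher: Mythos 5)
Your proposal is correct and follows exactly the approach the paper intends: it pulls an optimal elbow covering of $H$ back along the homomorphism, just as in the proof of Proposition~\ref{prop:sigmahomo}, which the paper explicitly cites as the template for this result while omitting the details. Your careful treatment of the degenerate case $f(u)=f(w)$ --- observing that the two pulled-back edges then point oppositely relative to $v$ in every orientation, so the path is never directed --- is precisely the ``straightforward modification'' the paper alludes to, and your verification of it is sound.
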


We now characterize $\elb(G)$ precisely, beginning with the lower bound.

\begin{theorem}\label{thm:tightbound}
  For any graph $G$ with $\chi(G)\geq 3$, $\elb(G) \geq \lceil
  \log_2\log_2 \chi(G)\rceil+1$.
\end{theorem}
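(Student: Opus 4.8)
The plan is to prove the inequality in its contrapositive, quantitative form: I will show that if $G$ admits an elbow covering $\{\ora{G_1},\dots,\ora{G_k}\}$ of size $k=\elb(G)$, then $\chi(G)\le 2^{2^{k-1}}$. Taking logarithms twice then gives $\log_2\log_2\chi(G)\le k-1$, where the hypothesis $\chi(G)\ge 3$ ensures that $\log_2\log_2\chi(G)$ is well defined; since $k-1$ is an integer this yields $\lceil\log_2\log_2\chi(G)\rceil\le k-1$, i.e.\ $\elb(G)\ge\lceil\log_2\log_2\chi(G)\rceil+1$. So the whole task reduces to extracting a proper coloring with at most $2^{2^{k-1}}$ colors from the $k$ orientations.

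To do this I would encode the covering locally. For each vertex $v$ and neighbor $u$, let $\phi_v(u)\in\{0,1\}^k$ be the vector whose $i$-th coordinate is $1$ if the edge $uv$ is oriented towards $v$ in $\ora{G_i}$, and $0$ otherwise. Two facts drive the argument. First, since orienting $uv$ towards $v$ means orienting it away from $u$, the vectors $\phi_v(u)$ and $\phi_u(v)$ are bitwise complementary. Second, an orientation $\ora{G_i}$ fails to make $u,v,w$ a directed path exactly when $uv$ and $vw$ agree in their $i$-th coordinate from $v$'s viewpoint; hence the elbow property translates to: for every $v$ and all $u,w\in N(v)$, the vectors $\phi_v(u)$ and $\phi_v(w)$ agree in some coordinate, i.e.\ they are never complementary.

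The decisive step is to group edges by ``type''. Pairing each vector with its complement partitions $\{0,1\}^k$ into $2^{k-1}$ complementary pairs; assign to each edge $uv$ the pair $\{\phi_v(u),\phi_u(v)\}=\{\phi_v(u),\bar{\phi_v(u)}\}$, and let $G_t$ be the subgraph of edges of type $t$. I claim each $G_t$ is bipartite. Fix $t$ with canonical representative $c_t$ (say the member with first bit $0$). If $u,w\in N(v)$ are joined to $v$ by type-$t$ edges, then $\phi_v(u),\phi_v(w)\in\{c_t,\bar c_t\}$, and the ``never complementary'' property forces $\phi_v(u)=\phi_v(w)$; thus the bit $b_v(t)\in\{0,1\}$ recording whether this common value is $c_t$ or $\bar c_t$ depends only on $v$. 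Along any type-$t$ edge $uv$, complementarity of $\phi_v(u)$ and $\phi_u(v)$ gives $b_v(t)\ne b_u(t)$, so $b_\cdot(t)$ is a proper $2$-coloring of $G_t$.

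Finally, coloring each vertex $v$ by the tuple $(b_v(t))_t$ over all $2^{k-1}$ types gives a proper coloring of $G$ with at most $2^{2^{k-1}}$ colors, since every edge lies in some $G_t$ and its endpoints then differ in coordinate $t$. This is the desired bound on $\chi(G)$. I expect the only real obstacle to be the bookkeeping in the type argument: verifying that the type of an edge is well defined and, crucially, that around each vertex all edges of a given type are forced to be oriented consistently so that $b_v(t)$ is well defined. Everything after that—the complementarity identities and the product coloring—is routine.
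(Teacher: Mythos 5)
Your proof is correct and takes essentially the same route as the paper's: your vectors $\phi_v(u)$ are just indicator vectors of the paper's sets $o(u,uv)$, your complementary-pair ``types'' are the paper's subgraphs $G_X=G_{[k]\setminus X}$, and your product of the $2^{k-1}$ resulting $2$-colorings is exactly the paper's $2^{2^{k-1}}$-coloring. The only differences are expository: you spell out the well-definedness of $b_v(t)$ and the final ceiling manipulation, both of which the paper leaves implicit.
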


\begin{proof}
Suppose $\chi(G)\geq 3$ and $\elb(G)=k$.  Using a minimum elbow
covering of $G$, we will construct a proper coloring of $G$ using
$2^{2^{k-1}}$ colors.

Take an elbow covering of $G$ using $k$ orientations
$\ora{G_1},\ldots, \ora{G_k}$, and for every edge incidence $(u,uv)$,
let $o(u,uv)$ be the set of orientations for which $uv$ is oriented
out of $u$.  That is, $o(u,uv)= \{i \mid \ora{uv}\in
\ora{E}(\ora{G_i}) \}$.

The following properties of $o(u,uv)$ follow from the definition of an
elbow covering.  First, for adjacent $u$ and $v$, $o(u,uv) =
[k]\setminus o(v,uv)$.  Second, for $u$ with neighbors $v$ and $w$,
$o(u,uv) \neq [k]\setminus o(u,uw)$.  For if $o(u,uv)$ and $o(u,uw)$
partition $[k]$, then $v,u,w$ is a directed path in every orientation,
a contradiction.

For $X\subseteq [k]$, let $G_X$ be the subgraph of $G$ on those edges
$uv$ such that $o(u,uv)=X$ or $o(v,uv)=X$.  Note that the vertices of
$G_X=G_{[k]\setminus X}$ can be properly 2-colored with colors $X$ and
$[k] \setminus X$.  Therefore $G_X$ is bipartite.  Choose a 2-coloring
of every $G_X$; since $G_X=G_{[k]\setminus X}$ we can insist that
$G_X$ and $G_{[k]\setminus X}$ get the same 2-coloring.  Call this
2-coloring $c_X$, and observe that any two adjacent vertices $u$ and
$v$ get a different color in some $c_X$, namely $c_{o(u,uv)}$.  Thus
the product of $c_X$ over every possible $X\subseteq [k]$ gives us a
proper coloring of $G$.  Since $c_X=c_{[k]\setminus X}$, the coloring
uses $2^{2^{k-1}}$ colors.  Therefore $\chi(G)\leq 2^{2^{k-1}}$.
\end{proof}

Now we prove that the lower bound is tight.

\begin{theorem}\label{th:elbow}
  For any graph $G$ with $\chi(G)\geq 3$, $\elb(G) = \lceil
  \log_2\log_2 \chi(G)\rceil+1$.
\end{theorem}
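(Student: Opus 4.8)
The lower bound is exactly Theorem~\ref{thm:tightbound}, so the plan is to establish the matching upper bound $\elb(G)\le \lceil\log_2\log_2\chi(G)\rceil+1$, essentially by reversing the construction used in that proof. First I would reduce to complete graphs: by Proposition~\ref{prop:elbowhomo} we have $\elb(G)\le\elb(K_{\chi(G)})$, so it suffices to show that $\elb(K_n)\le k$ whenever $n\le 2^{2^{k-1}}$, where $k=\lceil\log_2\log_2 n\rceil+1$ is the smallest integer satisfying $2^{k-1}\ge\log_2 n$. That this smallest $k$ equals $\lceil\log_2\log_2 n\rceil+1$, and hence matches the lower bound so as to force equality, is a routine manipulation of the inequality $n\le 2^{2^{k-1}}$.

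The heart of the argument is a direct construction of an elbow covering of $K_n$ of size $k$. Recall from the proof of Theorem~\ref{thm:tightbound} that an elbow covering is encoded by sets $o(u,uv)\subseteq[k]$ subject to the consistency relation $o(u,uv)=[k]\setminus o(v,uv)$ and the non-partition relation $o(u,uv)\ne[k]\setminus o(u,uw)$ for incident edges $uv,uw$. I would build these sets from a vertex labelling. Fix a transversal $\mathcal F$ of the complementary pairs $\{X,\bar X\}$ of subsets of $[k]$ --- for instance $\mathcal F=\{X\subseteq[k]:k\in X\}$ --- so that $|\mathcal F|=2^{k-1}$ and every subset of $[k]$ either lies in $\mathcal F$ or is the complement of a unique member of $\mathcal F$. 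Since $n\le 2^{2^{k-1}}=2^{|\mathcal F|}$, I can label the vertices of $K_n$ by distinct functions $\gamma_u\in\{0,1\}^{\mathcal F}$. For each edge $uv$, as $\gamma_u\ne\gamma_v$ there is some $X\in\mathcal F$ with $\gamma_u(X)\ne\gamma_v(X)$; I choose one such $X$ arbitrarily and set $o(u,uv)=X$ if $\gamma_u(X)=1$ and $o(u,uv)=\bar X$ otherwise.

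It then remains to verify the two defining properties. Consistency is immediate: the chosen coordinate $X$ depends only on the edge, and exactly one of $\gamma_u(X),\gamma_v(X)$ equals $1$, so the two endpoints select complementary representatives and $o(u,uv)=[k]\setminus o(v,uv)$. For the non-partition property, consider incident edges $uv,uw$ with chosen coordinates $X,Y\in\mathcal F$. If $X\ne Y$, then because $\mathcal F$ is a transversal the four sets $X,\bar X,Y,\bar Y$ are pairwise distinct, so $o(u,uv)\in\{X,\bar X\}$ cannot equal $[k]\setminus o(u,uw)\in\{Y,\bar Y\}$; and if $X=Y$, then both $o(u,uv)$ and $o(u,uw)$ are determined by the same bit $\gamma_u(X)$ and are therefore equal, not complementary. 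Either way $o(u,uv)\ne[k]\setminus o(u,uw)$, which is precisely the condition ensuring the path $v,u,w$ fails to be a directed path in some orientation. This produces an elbow covering of $K_n$ with $k$ orientations, completing the upper bound and hence the theorem.

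The one genuinely delicate point --- and the motivation for taking $\mathcal F$ to be a transversal of the complementary pairs --- is arranging that the non-partition property holds no matter which differing coordinate is chosen for each edge. The reward of this encoding is that both consistency and non-partition hold automatically, so the only real content is the counting step $n\le 2^{|\mathcal F|}$ together with the short verification above. I would not expect any serious obstacle beyond setting up this labelling correctly.
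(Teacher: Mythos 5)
Your proof is correct, but it takes a genuinely different route from the paper's. The paper also reduces to complete graphs via Proposition~\ref{prop:elbowhomo}, but then proceeds by induction on $\ell$ with $n=2^{2^\ell}$: from an elbow covering $\ora{G_1},\ldots,\ora{G_k}$ of $K_n$ it builds a covering of $K_{n^2}$ of size $k+1$ by composing each $\ora{G_i}$ with itself lexicographically (orient $v_a^bv_c^d$ by the first coordinate, breaking ties $v_a=v_c$ by the second) and adding one extra orientation that composes $\ora{G_1}$ with its own reversal; the extra orientation kills exactly the paths whose first coordinates collapse to two values. You instead give a one-shot construction that directly inverts the encoding from the lower bound: you observe that an elbow covering of size $k$ is equivalent to an assignment of incidence sets $o(u,uv)\subseteq[k]$ satisfying consistency ($o(u,uv)=[k]\setminus o(v,uv)$, which is exactly what makes the $\ora{G_i}$ well-defined orientations) and non-partition ($o(u,uv)\neq[k]\setminus o(u,uw)$, which is exactly the elbow condition, since the path $v,u,w$ is directed in every orientation precisely when the two sets partition $[k]$), and then you realize such an assignment from distinct labels $\gamma_u\in\{0,1\}^{\mathcal F}$ over a transversal $\mathcal F$ of the $2^{k-1}$ complementary pairs. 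Your case analysis (same chosen coordinate gives equal sets; different coordinates give sets from disjoint complementary pairs) is sound, and the counting $n\le 2^{|\mathcal F|}=2^{2^{k-1}}$ matches the lower bound exactly. What each approach buys: yours works for arbitrary $n$ without rounding up to a double power of two, avoids induction, and makes transparent that Theorems~\ref{thm:tightbound} and~\ref{th:elbow} are the two directions of one and the same bijective encoding (elbow coverings of $K_n$ of size $k$ versus proper colorings by products of $2^{k-1}$ two-colorings); the paper's recursive composition, on the other hand, produces acyclic orientations when the base covering is acyclic (relevant to the question about analogues of acyclic orientations discussed in Section~\ref{sec:eye}) and parallels the Erd\H{o}s--Szekeres/K\v{r}\'{i}\v{z}--Ne\v{s}et\v{r}il machinery cited there.
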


\begin{proof}
  It suffices to show that $\elb(G) \leq \lceil \log_2\log_2
  \chi(G)\rceil+1$, and in particular it suffices to show this when
  $\chi(G) = 2^{2^\ell}$ for some nonnegative integer $\ell$.
  Proposition \ref{prop:elbowhomo} tells us that we can assume $G$ is
  the complete graph on $n=2^{2^\ell}$ vertices.  We proceed by
  induction.  If $\ell=1$ then $n=4$ and it is easy to confirm that
  $\elb(K_4)=2$.

  So assume $\elb(K_n)=k =\lceil \log_2\log_2 n\rceil+1$, let $G=K_n$,
  and let $\ora{G_1},\ldots, \ora{G_k}$ be a minimum elbow covering of
  $G$.  We will use this to construct an elbow covering of
  $G'=K_{n^2}$ as follows.  Label the vertices of $G$ as $\{v_i \mid 1
  \leq i \leq n \}$, and label the vertices of $G'$ as $\{v_i^j \mid 1
  \leq i,j \leq n \}$.  For each $\ora{G_i}$ we construct $\ora{G'_i}$
  such that $\ora{v_a^bv_c^d}\in \ora{E}(\ora{G'_i})$ precisely if
  $\ora{v_av_c}\in \ora{E}(\ora{G_i})$, or if $v_a=v_c$ and
  $\ora{v_bv_d}\in \ora{E}(\ora{G_i})$.  Finally, we add an
  orientation $\ora{G'_{k+1}}$ such that $\ora{v_a^bv_c^d}\in
  \ora{E}(\ora{G'_{k+1}})$ precisely if $\ora{v_av_c}\in
  \ora{E}(\ora{G_1})$, or if $v_a=v_c$ and $\ora{v_dv_b}\in
  \ora{E}(\ora{G_1})$.  In other words, we compose $\ora{G_i}$ with
  itself for each $i$, then we compose $\ora{G_1}$ with its reversal.

  Now consider the possibility that $v_i^j$, $v_a^b$, $v_c^d$ form a
  directed path in every orientation of $G'$.  By the construction of
  our orientations of $G'$, it is easy to see that $|\{i,a,c \}|= 2$.
  So assume without loss of generality that $i=a\neq c$.  Since the
  edge $v_i^jv_a^b$ will be oriented differently in $\ora{G'_1}$ and
  $\ora{G'_{k+1}}$ and the edge $v_a^bv_c^d$ will be oriented the
  same, it follows that $v_i^j$, $v_a^b$, $v_c^d$ cannot be a directed
  path in both orientations.  Therefore we have an elbow covering of
  $K_{n^2}$ of size $k+1$, and the theorem follows by induction.
\end{proof}

This gives us a bound on $\sigma(G)$:

\begin{corollary}\label{cor:tight}
  For any graph $G$ with $\chi(G)\geq 2$, $$\lceil \log_2\log_2
  \chi(G)\rceil+1 \ \leq \ \sigma(G) \ \leq \ 2\lceil\log_2\log_2
  \chi(G)\rceil+2.$$
\end{corollary}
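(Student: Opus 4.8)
The plan is to derive the corollary directly by sandwiching $\sigma(G)$ between multiples of the elbow number and then substituting the exact value of $\elb(G)$ from Theorem~\ref{th:elbow}. Proposition~\ref{prop:sigmaelbow} gives $\elb(G) \leq \sigma(G) \leq 2\elb(G)$, so both the lower and the upper bound on $\sigma(G)$ will follow immediately once $\elb(G)$ is pinned down. The entire argument is thus a combination of these two earlier results, split into a generic case and one boundary case.

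First I would treat the generic case $\chi(G) \geq 3$, where Theorem~\ref{th:elbow} supplies the exact value $\elb(G) = \lceil \log_2\log_2 \chi(G)\rceil + 1$. Feeding this into the left inequality of Proposition~\ref{prop:sigmaelbow} yields $\sigma(G) \geq \lceil \log_2\log_2 \chi(G)\rceil + 1$, and feeding it into the right inequality yields $\sigma(G) \leq 2\bigl(\lceil \log_2\log_2 \chi(G)\rceil + 1\bigr) = 2\lceil \log_2\log_2 \chi(G)\rceil + 2$. This matches both claimed bounds verbatim, so the generic case is a one-line substitution.

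The only place where care is needed — and hence the main (albeit minor) obstacle — is the boundary case $\chi(G) = 2$, since Theorem~\ref{th:elbow} is stated only for $\chi(G) \geq 3$. Here $G$ is bipartite and, because $\chi(G) \geq 2$, has at least one edge. I would argue directly that $\elb(G) = 1$: orient every edge from one side of the bipartition to the other, so that the middle vertex of any path $u,v,w$ is either a sink or a source and the path is never directed; this single orientation is an elbow covering, and one orientation is clearly necessary. Proposition~\ref{prop:sigmaelbow} then gives $1 \leq \sigma(G) \leq 2$. Since $\log_2\log_2 2 = 0$, the formula evaluates to $\lceil \log_2\log_2 2\rceil + 1 = 1$ for the lower bound and $2\lceil \log_2\log_2 2\rceil + 2 = 2$ for the upper bound, so the stated inequalities hold in this case as well. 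Combining the two cases completes the proof.
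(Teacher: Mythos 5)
Your proposal is correct and matches the paper's proof: the case $\chi(G)\geq 3$ is the same one-line combination of Proposition~\ref{prop:sigmaelbow} with Theorem~\ref{th:elbow}, and your handling of $\chi(G)=2$ via the all-edges-from-$A$-to-$B$ orientation is the same construction the paper uses, since applying Proposition~\ref{prop:sigmaelbow} to that single orientation (taking it together with its reversal) yields exactly the paper's two orientations in which all of $A$, respectively all of $B$, are sources.
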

\begin{proof}
  When $\chi(G)\geq 3$, this follows immediately from Proposition
  \ref{prop:sigmaelbow} and the previous theorem.  It is easy to see
  that any bipartite graph has orientation covering number at most
  two: if $V(G)$ is covered by two disjoint stable sets $A$ and $B$,
  we simply choose one orientation in which all vertices in $A$ are
  sources, and one orientation in which all vertices in $B$ are
  sources.  The result follows.
\end{proof}

We can actually improve the lower bound by exploiting properties of
orientation coverings to refine the proof of Theorem
\ref{thm:tightbound}:

\begin{theorem}\label{thm:tighter}
Any graph $G$ with $\sigma(G)=k\geq 3$ has $\chi(G)\leq k+2^{2^{k-1}-k-1}$.  Thus
$$k \geq \log_2\left(\log_2(\chi(G)-k)+k+1\right)
$$
\end{theorem}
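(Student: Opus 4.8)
The plan is to refine the coloring built in the proof of Theorem~\ref{thm:tightbound}, using the extra strength of an orientation covering over an elbow covering. Fix a minimum orientation covering $\ora{G_1},\dots,\ora{G_k}$ and, as there, set $o(u,uv)=\{i : \ora{uv}\in\ora{E}(\ora{G_i})\}$; write $S_u=\{o(u,uv) : v\in N(u)\}$ for the family of out-sets at $u$. The whole point of an orientation covering is that any two incident edges are simultaneously out-oriented in some $\ora{G_i}$, so any two members of $S_u$ meet; that is, \emph{$S_u$ is an intersecting family}. Consequently, for every vertex $u$ of degree at least $2$: no member of $S_u$ is empty, $S_u$ contains no complementary pair, $S_u$ contains at most one singleton, and if $\{i\}\in S_u$ then every member of $S_u$ contains $i$ (so $S_u$ lies in the ``star'' $\mathrm{st}(i)$ of subsets containing $i$). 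I would first record these facts, since they are exactly what lets us collapse colors.

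Next I would set up the colors. As in Theorem~\ref{thm:tightbound}, a color is a \emph{transversal}: a choice of one set from each of the $2^{k-1}$ complementary pairs $\{X,[k]\setminus X\}$. Fix once and for all a transversal $\mathcal M$ that selects $[k]$ from the pair $\{\emptyset,[k]\}$ and selects the co-singleton $[k]\setminus\{i\}$ from each pair $\{\{i\},[k]\setminus\{i\}\}$ (these $k+1$ choices lie on distinct pairs, so such an $\mathcal M$ exists). Now color each vertex $u$ of degree at least $2$ as follows. If $\{i\}\in S_u$ for the necessarily unique $i$, give $u$ a \emph{special} color $s_i$, one of $k$ fresh labels. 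Otherwise give $u$ the \emph{generic} color $T_u$, the transversal agreeing with $\mathcal M$ except that on each pair $\{X,[k]\setminus X\}$ with $X\in S_u$ it selects $X$. Finally, extend to the vertices of degree at most $1$ greedily; each has at most one neighbor and the palette has size at least $2$, so this adds no new color.

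I would then verify properness and count. Take an edge $uv$ with $X=o(u,uv)\in S_u$ and $[k]\setminus X=o(v,uv)\in S_v$. If both endpoints are special, say with labels $s_i,s_j$, then $X$ meets $\{i\}$ and $[k]\setminus X$ meets $\{j\}$, forcing $i\in X$ and $j\notin X$, hence $i\neq j$. If both are generic, $T_u$ selects $X$ while $T_v$ selects $[k]\setminus X$ on the same pair, so $T_u\neq T_v$. If one is special and one generic, the labels come from disjoint palettes. Hence the coloring is proper. For the count: a generic vertex has $\emptyset\notin S_u$ and no singleton in $S_u$, so on the pair $\{\emptyset,[k]\}$ and on every singleton pair its color agrees with $\mathcal M$; thus generic colors are frozen on these $k+1$ pairs and vary only on the remaining $2^{k-1}-k-1$ pairs, giving at most $2^{2^{k-1}-k-1}$ of them. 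Together with the $k$ special colors this yields $\chi(G)\le k+2^{2^{k-1}-k-1}$, and the displayed inequality follows by taking logarithms twice (assuming $\chi(G)>k$, the only case of interest).

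The main obstacle is the bookkeeping that simultaneously (i) confines the generic colors to the $k+1$ special pairs and (ii) keeps the special colors distinct from one another and from the generic ones. This is precisely where the intersecting property is essential: it guarantees $\emptyset\notin S_u$ and at most one singleton per $S_u$ (so the generic coordinates really are frozen), and it forces $i\neq j$ for adjacent special vertices via $\{i\}\in S_u\Rightarrow S_u\subseteq\mathrm{st}(i)$. A secondary point to handle cleanly is that leaves may have $o(u,uv)=\emptyset$, which is why the vertices of degree at most $1$ are colored in a separate greedy pass rather than by the main rule.
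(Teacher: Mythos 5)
Your proof is correct and takes essentially the same approach as the paper: your $k$ special colors $s_i$ correspond exactly to the paper's stable sets $S_i$ of vertices having a neighbor with singleton out-set $\{i\}$, and your generic transversal colors are precisely the paper's product of 2-colorings over the $2^{k-1}-k-1$ complementary pairs of sets of size between $2$ and $k-2$. The only cosmetic differences are your intersecting-family packaging of the covering properties and your greedy treatment of vertices of degree at most one, where the paper instead removes them via a minimum-counterexample argument.
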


\begin{proof}
  Let $G$ be a minimum counterexample.  We can assume $G$ has no
  vertex of degree 1, since removing such a vertex will change neither
  $\sigma(G)$ nor $\chi(G)$.  Consider an orientation covering
  $\ora{G_1},\ldots \ora{G_k}$ of $G$, and let $\ell =
  k+2^{2^{k-1}-k-1}$.  We will construct an $\ell$-coloring of $G$. As
  in the proof of Theorem~\ref{thm:tightbound}, we set $o(u,uv)= \{i
  \mid \ora{uv}\in \ora{E}(\ora{G_i}) \}$ for any incidence
  $(u,uv)$. First, for $i\in [k]$ let $S_i$ be the set of vertices $v$
  having a neighbor $u$ such that $o(v,uv)=\{i\}$.  Each $S_i$ is a
  stable set.  Let $S=\cup_{i}S_i$ and let $U=V(G)\setminus S$.  We
  now proceed to color $U$ using $\ell-k=2^{2^{k-1}-k-1}$ colors.

  We claim that for any adjacent vertices $u, v\in U$, $2\leq
  |o(v,uv)|\leq k-2$.  Clearly $o(v,uv)$ cannot be empty or equal to
  $[k]$ by properties of an orientation covering, since $G$ has
  minimum degree at least two.  And $o(v,uv)$ cannot have size 1 or
  $k-1$, otherwise either $u$ or $v$ would be in $S$.  Thus there are
  $2^k-2k-2$ possibilities for $o(v,uv)$, and for each possibility we
  get a bipartite graph, as in the proof of Theorem
  \ref{thm:tightbound}.  And again as in the proof of Theorem
  \ref{thm:tightbound}, we actually get a bipartite graph for every
  complementary pair of subsets of $[k]$.  Thus we color $U$ by taking
  the product of 2-colorings of $2^{k-1}-k-1$ bipartite subgraphs.
  This gives us an $(\ell-k)$-coloring of $U$ and an $\ell$-coloring
  of $G$.
\end{proof}

Although this bound on $\sigma(G)$ may seem ungainly, we will see in
Section~\ref{sec:tight} that it is tight for small values of
$\chi(G)$.

Just as we have bounded $\sigma(G)$ and $\elb(G)$ in terms of
$\chi(G)$, we can bound $\chi(G)$ in terms of $\sigma(G)$ and
$\elb(G)$.

\begin{corollary}\label{cor:converse}
For any graph $G$ with $\elb(G)\geq 2$ and $\sigma(G)\geq 3$, 
$$2^{2^{\elb(G)-2}} \ < \ \chi(G) \  \leq   \ 2^{2^{\elb(G)-1}}$$
and
$$2^{2^{(\sigma(G)/2)-2}} \ < \ \chi(G) \  \leq   \ \sigma(G)+2^{(2^{(\sigma(G)-1)}-\sigma(G)-1)}.$$
\end{corollary}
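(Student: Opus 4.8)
The plan is to derive both chains of inequalities in Corollary~\ref{cor:converse} purely by algebraically inverting the bounds already established for $\elb(G)$ and $\sigma(G)$ in terms of $\chi(G)$, since the corollary is just a restatement of the earlier theorems with $\chi(G)$ made the subject. No new graph-theoretic construction should be needed; the entire content is bookkeeping with ceilings and the monotone function $x \mapsto 2^{2^{x}}$.

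For the elbow bounds, I would start from Theorem~\ref{th:elbow}, which gives the exact value $\elb(G) = \lceil \log_2\log_2 \chi(G)\rceil + 1$ whenever $\chi(G)\geq 3$. Writing $e=\elb(G)$, this says $\lceil \log_2\log_2 \chi(G)\rceil = e-1$, which is equivalent to the two-sided estimate $e-2 < \log_2\log_2\chi(G) \leq e-1$ (the left inequality being strict because the ceiling strictly exceeds its argument minus one). Exponentiating twice with the increasing map $x\mapsto 2^{2^x}$ then yields $2^{2^{e-2}} < \chi(G) \leq 2^{2^{e-1}}$, which is precisely the first displayed inequality. I would need to check the boundary hypothesis: the corollary assumes $\elb(G)\geq 2$, and since $\elb(G)=1$ corresponds exactly to bipartite $G$ (where $\chi(G)\le 2$), the assumption $\elb(G)\geq 2$ guarantees $\chi(G)\geq 3$, so Theorem~\ref{th:elbow} applies.

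For the $\sigma$ bounds, the two sides come from different earlier results. The upper bound $\chi(G)\leq \sigma(G)+2^{(2^{(\sigma(G)-1)}-\sigma(G)-1)}$ is literally the conclusion of Theorem~\ref{thm:tighter}, which states $\chi(G)\leq k+2^{2^{k-1}-k-1}$ for $k=\sigma(G)\geq 3$; so that half requires only quoting the theorem and matching notation. The lower bound $2^{2^{(\sigma(G)/2)-2}} < \chi(G)$ comes from combining the elbow lower bound with Proposition~\ref{prop:sigmaelbow}: since $\sigma(G)\leq 2\elb(G)$, we have $\elb(G)\geq \sigma(G)/2$, and feeding this into the already-derived strict inequality $\chi(G) > 2^{2^{\elb(G)-2}}$ (using monotonicity of $2^{2^{x-2}}$ in $x$) gives $\chi(G) > 2^{2^{(\sigma(G)/2)-2}}$.

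The main obstacle, though minor, is handling the ceilings and strictness carefully so the inequalities come out with the exact direction and strictness claimed, and verifying that the hypotheses $\elb(G)\geq 2$ and $\sigma(G)\geq 3$ are exactly what is needed to invoke Theorems~\ref{th:elbow} and~\ref{thm:tighter} (in particular that $\sigma(G)\geq 3$ forces $\chi(G)\geq 3$ so that the elbow-based lower bound is also available). I expect no genuine difficulty beyond this, and the proof should reduce to a short paragraph invoking Theorem~\ref{th:elbow}, Proposition~\ref{prop:sigmaelbow}, and Theorem~\ref{thm:tighter} and rearranging; I would write it accordingly and omit the routine exponentiation steps.
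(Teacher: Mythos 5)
Your proposal is correct and follows exactly the route the paper intends: the corollary is stated without proof precisely because it is the inversion of Theorem~\ref{th:elbow} (via the ceiling identity $e-2 < \log_2\log_2\chi(G) \leq e-1$), combined with Theorem~\ref{thm:tighter} for the $\sigma$ upper bound and Proposition~\ref{prop:sigmaelbow} ($\elb(G)\geq\sigma(G)/2$) for the $\sigma$ lower bound. Your boundary checks (that $\elb(G)\geq 2$, respectively $\sigma(G)\geq 3$, force $\chi(G)\geq 3$ via the bipartite characterizations) are exactly the bookkeeping the paper leaves implicit, so nothing is missing.
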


\subsection{Elbows versus eyebrows}\label{sec:eye}

The elbow number of a graph is very closely related to the {\em
  eyebrow number} of a graph, studied by K\v r\' i\v z and Ne\v set\v
ril \cite{kriznesetril91} and defined thusly:

\begin{definition}
  The eyebrow number $\eye_\pi(G)$ of a graph $G$ is the minimum $k$
  for which there exist $k$ permutations $\{\pi_i \mid i\in [k]\}$ on
  $V(G)$ with the following property: For any edge $uv$ of $G$ and
  third vertex $w$, there is an $i$ such that $\pi_i(w)$ is not
  between $\pi_i(u)$ and $\pi_i(v)$.
\end{definition}

The connection between permutations and acyclic orientations is the
following. For a permutation $\pi$ of $[n]$ and a graph $G$ with
vertex set $\{v_1,\ldots,v_n\}$, define the following acyclic
orientation $\ora{G_{\pi}}$ of $G$: an edge $v_iv_j$ of $G$ is
oriented from $v_i$ to $v_j$ in $\ora{G_{\pi}}$ precisely if $\pi(i) <
\pi(j)$. Conversely, an acyclic orientation of $G$ is a partial order
on its vertices, and any linear extension of this order corresponds to
a permutation of $[n]$.  In~\cite{kriznesetril91}, K\v r\' i\v z and
Ne\v set\v ril proved that $\eye_\pi(K_n)= \lceil\log_2\log_2
n\rceil+1$ using the tightness of a result of Erd\H os and
Szekeres~\cite{ES35} that is closely related to our proof of the upper
bound on $\elb(G)$.  This can be used to provide an alternative proof
of the upper bound in Corollary \ref{cor:tight}.  Like us, they were
motivated by a different problem.  They were interested in proving the
existence of posets of bounded dimension whose Hasse diagrams could
have arbitrarily high chromatic number.  It follows immediately from
Theorem \ref{th:elbow} that for a complete graph, the eyebrow number
and elbow number are equal.  However, the eyebrow number is not
monotonic under homomorphism -- there are graphs for which
$\eye_\pi(G) = \eye_\pi(K_{\chi(G)})+1 = \elb(K_{\chi(G)})+1 =
\elb(G)+1$ (for example, a sufficiently large complete and regular
tripartite graph \cite{kriznesetril91}).  More fundamentally, the
eyebrow number of a graph does not really reflect the structure of
incident edge pairs.  This is the first reason behind our interest in
the elbow number as opposed to the eyebrow number.

The second reason is that we do not want to restrict ourselves to
acyclic orientations. McClain \cite{McC08+} asked whether, for any
$n$, $L(K_n)$ has a minimum equivalence covering in which every
equivalence subgraph is the analogue of an acyclic orientation of
$K_n$ (recall we defined an analogue in Section 2).  Theorem
\ref{thm:tightbound} answers the corresponding question in the
affirmative for elbow coverings.  That is, using orientations with
cycles does not help in constructing a minimum elbow covering.
However the question remains open for the orientation covering number.

\section{Tightness and complexity}\label{sec:tight}

Let us consider our upper bound on $\sigma(G)$, which we got from the
bound on $\elb(G)$.  Corollary \ref{cor:tight} implies that
$\sigma(K_{16})\leq 6$, but as one might expect, this bound is not
tight. The five (acyclic) orientations of $K_{16}$ associated to the
following five permutations show that $\sigma(K_{16})\le 5$:

\vspace{8pt}

\begin{center}
\begin{small}
\begin{tabular}{|c||c|c|c|c|c|c|c|c|c|c|c|c|c|c|c|c|}
\hline $i$ & 1 & 2 & 3 & 4 & 5 & 6 & 7 & 8 & 9 & 10 & 11 & 12
 & 13 & 14 & 15 & 16 \\
\hline \hline $\pi_1(i)$ & 1 & 2 & 3 & 4 & 5 & 6 & 7 & 8 & 9 & 10 & 11 & 12
 & 13 & 14 & 15 & 16\\

\hline $\pi_2(i)$ &  13 & 11 & 10 &  6 &  4 &  9 &  5 &  3 &  7 &  2 & 12 &
8 & 14 & 15 & 16 &  1\\
\hline $\pi_3(i)$ &   14 & 11 & 10 &  3 &  8 & 12 &  5 &  7 &  2 &  9 &  4 &
6 & 15 & 16 &  1 & 13\\
\hline $\pi_4(i)$ &  15 &  7 &  8 &  9 &  6 &  4 &  3 & 12 & 10 & 11 &  5 &
2 & 16 &  1 & 13 & 14\\
\hline $\pi_5(i)$ & 16 &  5 &  4 & 10 & 11 &  3 & 12 &  6 &  9 &  7 &  2 &  8
&  1 & 13 & 14 & 15\\
\hline

\end{tabular}
\end{small}
\end{center}

\vspace{5pt}

Corollary \ref{cor:converse} tells us that if $\sigma(G)=3$, then $3
\leq \chi(G)\leq 4$ -- the lower bound comes from the easy fact that
$\sigma(G)\le 2$ precisely if $G$ is bipartite.  The converse is also
true by Corollary~\ref{cor:chi} and the orientation covering of size
three of $K_4$ depicted in Figure \ref{fig:k4}. If $\sigma(G)=4$ then
Corollary \ref{cor:converse} tells us that $\chi(G)\leq 12$.  This is
tight as well -- an example due to McClain \cite{McC08+} implies that
$\sigma(K_{12})= 4$.  So there is some evidence that the improved
bound of Theorem \ref{thm:tighter} may be tight or nearly tight in
general. As a consequence of these observations, we obtain the
following two equivalences:

\begin{theorem}\label{th:fineq}
  A graph $G$ has $\sigma(G)=3$ precisely if $3 \leq \chi(G)\leq 4$,
  and $\sigma(G)=4$ precisely if $5 \leq \chi(G)\leq 12$.
\end{theorem}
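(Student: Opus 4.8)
The plan is to assemble the theorem from three ingredients already in hand: (i) the characterization that $\sigma(G)\le 2$ exactly when $G$ is bipartite, equivalently $\sigma(G)\ge 3$ exactly when $\chi(G)\ge 3$; (ii) the two-sided bound of Corollary~\ref{cor:converse} relating $\chi(G)$ and $\sigma(G)$; and (iii) homomorphism monotonicity (Proposition~\ref{prop:sigmahomo}, summarized in Corollary~\ref{cor:chi}) together with the small-graph values $\sigma(K_4)=3$, witnessed by the orientation covering in Figure~\ref{fig:k4}, and $\sigma(K_{12})=4$, witnessed by McClain's example. Before invoking Corollary~\ref{cor:converse} I would check its hypotheses: whenever $\sigma(G)\ge 3$ we get $\elb(G)\ge \sigma(G)/2 \ge 2$ from Proposition~\ref{prop:sigmaelbow}, so both required conditions $\sigma(G)\ge 3$ and $\elb(G)\ge 2$ are automatically met.

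For the first equivalence, the forward direction is immediate: if $\sigma(G)=3$ then $G$ is non-bipartite, so $\chi(G)\ge 3$, and substituting $\sigma(G)=3$ into the upper bound of Corollary~\ref{cor:converse} gives $\chi(G)\le 3+2^{2^{2}-3-1}=3+2^{0}=4$. For the converse, if $3\le\chi(G)\le 4$ then $G$ is non-bipartite, so $\sigma(G)\ge 3$, while a proper $4$-colouring is a homomorphism $G\to K_4$, whence $\sigma(G)\le\sigma(K_4)=3$ by Proposition~\ref{prop:sigmahomo}; thus $\sigma(G)=3$.

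The second equivalence is handled the same way, except that the sharp lower bound on $\chi(G)$ must be bootstrapped rather than read off from Corollary~\ref{cor:converse}. Forward: $\sigma(G)=4$ gives $\chi(G)\le 4+2^{2^{3}-4-1}=4+2^{3}=12$ from the upper bound, and $\chi(G)\ge 3$ from non-bipartiteness; since $\sigma(G)\ne 3$, the first equivalence forbids $\chi(G)\in\{3,4\}$, forcing $\chi(G)\ge 5$. Converse: if $5\le\chi(G)\le 12$ then the first equivalence rules out $\sigma(G)\le 3$ (because $\chi(G)>4$ while $G$ is non-bipartite), so $\sigma(G)\ge 4$, while the homomorphism $G\to K_{12}$ gives $\sigma(G)\le\sigma(K_{12})=4$; thus $\sigma(G)=4$.

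The only real subtlety, and the step I would flag as the crux, is that the lower inequality of Corollary~\ref{cor:converse} is far too weak for this purpose: for $\sigma(G)=3$ and $\sigma(G)=4$ it yields only $\chi(G)\ge 2$ and $\chi(G)\ge 3$ respectively. The sharp lower bounds $\chi(G)\ge 3$ and $\chi(G)\ge 5$ must therefore be obtained from the bipartite characterization and, in the $\sigma(G)=4$ case, from the already-established $\sigma(G)=3$ equivalence. Everything else reduces to substituting $\sigma(G)\in\{3,4\}$ into the clean upper-bound formula and quoting $\sigma(K_4)=3$ and $\sigma(K_{12})=4$ as external inputs.
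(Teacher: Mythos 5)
Your proposal is correct and follows essentially the same route as the paper: the upper bounds $\chi(G)\le 4$ and $\chi(G)\le 12$ come from substituting $\sigma(G)=3,4$ into Corollary~\ref{cor:converse} (i.e.\ Theorem~\ref{thm:tighter}), the sharp lower bounds from the bipartite characterization of $\sigma(G)\le 2$ plus bootstrapping off the $\sigma(G)=3$ case, and the converses from homomorphism monotonicity with $\sigma(K_4)=3$ (Figure~\ref{fig:k4}) and $\sigma(K_{12})=4$ (McClain's example), exactly as in the discussion preceding the theorem. One cosmetic slip: for $\sigma(G)=3$ you have $\sigma(G)/2=3/2<2$, so the hypothesis $\elb(G)\ge 2$ of Corollary~\ref{cor:converse} follows from Proposition~\ref{prop:sigmaelbow} only via integrality of $\elb(G)$, namely $\elb(G)\ge\lceil 3/2\rceil=2$.
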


Blokhuis and Kloks \cite{BK95} proved that $\mathit{eq}(G)$ is
NP-complete to compute, even if it is at most four and $G$ has maximum
degree at most six and clique number at most three.  As proved by
Maffray and Preissmann \cite{MP96}, it is NP-complete to decide
whether or not $G$ is $k$-colorable for $k\geq 3$, even when $G$ is
triangle-free.  As a consequence, $\sigma(G)$ is difficult to compute,
as is $\mathit{eq}(L(G))$:

\begin{theorem}
  It is NP-complete to decide whether or not a triangle-free graph $G$
  has $\sigma(G) \leq 3$ (resp. $\sigma(G) \leq 4$). Equivalently, it
  is NP-complete to decide whether or not $\mathit{eq}(L(G)) \leq 3$
  (resp. $\mathit{eq}(L(G)) \leq 4$).
\end{theorem}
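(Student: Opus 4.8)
The plan is to establish NP-completeness by a direct reduction from triangle-free $3$-colorability (resp. triangle-free $5$-colorability), invoking the Maffray--Preissmann result that these problems are NP-complete, together with the exact characterization of Theorem~\ref{th:fineq} and the identity $\sigma(G)=\mathit{eq}(L(G))$ for triangle-free graphs given by Proposition~\ref{prop:trifree}.

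First I would settle membership in NP. Given a triangle-free graph $G$, a certificate for $\sigma(G)\le 3$ is simply a collection of three orientations of $G$; one checks in polynomial time that for every path $u,v,w$ the two incident edges $uv,vw$ are simultaneously out-oriented from $v$ in at least one of the three orientations. The same works for $\sigma(G)\le 4$ with four orientations. Since Proposition~\ref{prop:trifree} gives $\mathit{eq}(L(G))=\sigma(G)$ for triangle-free $G$, the two formulations are literally the same decision problem on triangle-free inputs, so membership and hardness transfer between them at once.

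Next, for NP-hardness I would exploit Theorem~\ref{th:fineq}, which states that for any graph $\sigma(G)=3$ exactly when $3\le\chi(G)\le 4$ and $\sigma(G)=4$ exactly when $5\le\chi(G)\le 12$. Combined with the easy fact (noted in the proof of Corollary~\ref{cor:tight}) that $\sigma(G)\le 2$ iff $G$ is bipartite, this yields, for triangle-free $G$, the clean equivalence $\sigma(G)\le 3\iff\chi(G)\le 4$ and $\sigma(G)\le 4\iff\chi(G)\le 12$. Thus deciding $\sigma(G)\le 3$ on triangle-free inputs is exactly deciding $4$-colorability on triangle-free inputs, and deciding $\sigma(G)\le 4$ is exactly deciding $12$-colorability on triangle-free inputs. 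I would then reduce from the NP-complete problems ``is the triangle-free graph $H$ $3$-colorable?'' (resp. ``$5$-colorable?''), noting that $k$-colorability for any fixed $k\ge 3$ is NP-complete on triangle-free graphs by Maffray--Preissmann. To bridge the gap between the target threshold (e.g.\ $4$-colorability) and the problem known to be hard (e.g.\ $3$-colorability), the reduction attaches a disjoint clique or otherwise inflates the chromatic number in a controlled way: from a triangle-free instance $H$ of $3$-colorability one builds a triangle-free $G$ with $\chi(G)\le 4\iff\chi(H)\le 3$, for instance by taking the disjoint union of $H$ with a triangle-free graph of chromatic number exactly $4$, so that $\chi(G)=\max(\chi(H),4)$; then $\chi(G)\le 4$ iff $\chi(H)\le 3$ (using that $\chi(H)\le 4$ is automatic once $H$ is guaranteed $4$-colorable, which must be arranged), or more robustly by a graph join/product construction that raises $\chi(H)$ by a fixed amount while preserving triangle-freeness. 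The analogous construction handles $\sigma(G)\le 4$ by targeting the boundary at $\chi=12$ from a triangle-free $5$-colorability instance.

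The main obstacle will be arranging the chromatic-number shift while preserving triangle-freeness, since the natural gadget for boosting $\chi$ by joining to a clique or a complete multipartite graph destroys triangle-freeness. The cleanest route is to use a categorical (tensor) product or a carefully chosen triangle-free high-chromatic ``padding'' graph combined via disjoint union, so that the chromatic number of the composite is a monotone function of $\chi(H)$ that crosses the relevant threshold ($4$ or $12$) exactly as $\chi(H)$ crosses $3$ (or $5$); one must verify the product/union stays triangle-free and that $\chi$ behaves as claimed. Once that gadget is in place the equivalences above make the reduction immediate, and the NP-completeness of all four decision problems follows.
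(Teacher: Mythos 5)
Your skeleton is the paper's: Theorem~\ref{th:fineq}, together with the fact that $\sigma(G)\le 2$ iff $G$ is bipartite, gives $\sigma(G)\le 3\iff\chi(G)\le 4$ and $\sigma(G)\le 4\iff\chi(G)\le 12$; Proposition~\ref{prop:trifree} identifies $\sigma(G)$ with $\mathit{eq}(L(G))$ on triangle-free inputs; and NP membership via orientation certificates is routine. But your final step goes off the rails. The Maffray--Preissmann result, exactly as you quote it, asserts NP-completeness of $k$-colorability on \emph{triangle-free} graphs for every fixed $k\ge 3$. So you may take $k=4$ and $k=12$ directly: deciding $\sigma(G)\le 3$ on triangle-free inputs \emph{is} deciding $4$-colorability on triangle-free inputs, the identity map is the reduction, and you are done. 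The ``main obstacle'' of your last paragraph --- shifting the chromatic number from $3$ to $4$ (or $5$ to $12$) while preserving triangle-freeness --- is entirely self-inflicted, and this is how the paper avoids any gadgetry.

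Worse, the gadgets you float for that shift do not work as described. Taking the disjoint union of $H$ with a triangle-free graph of chromatic number exactly $4$ gives $\chi(G)=\max(\chi(H),4)$, so $\chi(G)\le 4$ holds whenever $\chi(H)\le 4$ and in particular fails to distinguish $3$-colorable instances from $4$-chromatic ones; your parenthetical ``which must be arranged'' is precisely the unresolved difficulty, not a detail. The categorical (tensor) product is even less suitable, since $\chi(G\times H)\le\min\left(\chi(G),\chi(H)\right)$: it can never raise the chromatic number. A correct gadget does exist --- the Mycielski construction raises $\chi$ by exactly one while preserving triangle-freeness, so iterating it would bridge $3\to 4$ and $5\to 12$ --- but you never name it, and given the direct applicability of Maffray--Preissmann at $k=4$ and $k=12$ the whole detour should simply be deleted. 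With that excision, your argument coincides with the paper's.
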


In fact, we conjecture that this also holds for all larger values of
$\sigma$:

\begin{conjecture}
For any $k\ge 3$, it is NP-complete to decide whether or not $\sigma(G) \le k$.
\end{conjecture}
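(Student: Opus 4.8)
The plan is to separate the two halves of NP-completeness. Membership is uniform in $k$: a witness for $\sigma(G)\le k$ is a list of $k$ orientations of $G$, and one verifies in polynomial time that they form an orientation covering by checking, for each vertex $v$ and each pair of edges incident to $v$, whether some orientation directs both out of $v$. So the entire content is NP-hardness. For $k\in\{3,4\}$ this is already immediate from Theorem~\ref{th:fineq}: since $\sigma(G)\le 3$ is equivalent to $\chi(G)\le 4$ and $\sigma(G)\le 4$ is equivalent to $\chi(G)\le 12$, the reduction is the identity map from deciding $\chi(G)\le 4$ (resp.\ $\le 12$) on triangle-free graphs, which is NP-complete by Maffray and Preissmann~\cite{MP96}. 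The obstruction to extending this to all $k$ is that no clean equivalence ``$\sigma(G)\le k$ iff $\chi(G)\le c_k$'' is available: Corollary~\ref{cor:tight} and Theorem~\ref{thm:tighter} only sandwich $\sigma(G)$ between $\lceil\log_2\log_2\chi(G)\rceil+1$ and roughly twice that, and for graphs whose chromatic number lands in this ambiguous zone the value of $\sigma$ is not determined by $\chi$ alone. Tellingly, the analogue for the elbow number is trivial: Theorem~\ref{th:elbow} gives $\elb(G)=\lceil\log_2\log_2\chi(G)\rceil+1$ exactly, so $\elb(G)\le m$ is equivalent to the fixed chromatic threshold $\chi(G)\le 2^{2^{m-1}}$, and NP-completeness for every $m\ge 2$ follows at once from~\cite{MP96}. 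It is precisely the factor-two gap between $\elb$ and $\sigma$ that makes the $\sigma$ version genuinely open.

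The cleanest route would be to close that gap by resolving the tightness question raised after Corollary~\ref{cor:chi}, namely whether $\sigma(G)=\sigma(K_{\chi(G)})$ for every $G$. Corollary~\ref{cor:chi} already gives $\sigma(G)\le\sigma(K_{\chi(G)})$, so one would only need the reverse inequality. Granting it, set $c_k=\max\{n : \sigma(K_n)\le k\}$; this threshold is well defined since $\sigma(K_n)$ is nondecreasing in $n$, by Proposition~\ref{prop:sigmahomo} applied to the inclusion $K_n\to K_{n+1}$. Then $\sigma(G)\le k$ would be equivalent to $\chi(G)\le c_k$, and because $c_3=4$ and hence $c_k\ge 4$ for all $k\ge 3$, deciding $\chi(G)\le c_k$ on triangle-free graphs is NP-complete by~\cite{MP96}; the conjecture follows for every $k\ge 3$ through Corollary~\ref{cor:chi}. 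The whole difficulty is thus concentrated in the reverse inequality $\sigma(G)\ge\sigma(K_{\chi(G)})$ — an attractive statement, since it asserts that $\sigma$ depends only on $\chi$, but exactly the one the authors leave open.

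Absent that, I would attempt a direct amplification: build an operation $\Phi$ with $\sigma(\Phi(G))=\sigma(G)+1$ for all $G$, and iterate it $k-3$ times on a hard instance of $\sigma(\cdot)\le 3$ to produce a hard instance of $\sigma(\cdot)\le k$. A natural candidate for $\Phi$ is modelled on the doubling construction in the proof of Theorem~\ref{th:elbow} — a lexicographic-type product of $G$ with a fixed small graph together with one calibrated extra orientation — or a complete join with a clique tuned so that the cross incidences consume exactly one new orientation. The upper bound $\sigma(\Phi(G))\le\sigma(G)+1$ should be the routine half: lift an optimal orientation covering of $G$ across the product or join and append a single orientation handling the newly created incident pairs, just as one extra orientation suffices in Theorem~\ref{th:elbow}.

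The main obstacle, and the reason the statement is still a conjecture, is the matching lower bound $\sigma(\Phi(G))\ge\sigma(G)+1$. One cannot route this through $\chi$, because in the ambiguous zone the chromatic number does not pin down $\sigma$; instead one must argue combinatorially about incident edge pairs inside $\Phi(G)$, showing that no family of $\sigma(G)$ orientations can cover them. This is the same phenomenon that makes $\sigma$ hard to compute in the first place: we possess no succinct certificate that $\sigma(G)$ is large other than an essentially exhaustive one, so proving that an operation strictly increases $\sigma$ uniformly over all inputs demands exactly the sharp lower-bound control on $\sigma$ that the paper's two-sided estimates stop short of providing.
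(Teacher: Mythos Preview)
The statement you are addressing is a \emph{conjecture} in the paper, not a theorem: the authors explicitly leave it open and provide no proof. So there is no argument in the paper to compare your proposal against. What the paper does establish is precisely the $k\in\{3,4\}$ cases (the theorem just above the conjecture), via the equivalence with $\chi(G)\le 4$ and $\chi(G)\le 12$ and the Maffray--Preissmann result --- and you correctly reproduce that reasoning.

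Your proposal is not a proof, and you are candid about this: you outline two plausible attack routes (establishing $\sigma(G)=\sigma(K_{\chi(G)})$ in general, or constructing an amplification operator $\Phi$ with $\sigma(\Phi(G))=\sigma(G)+1$) and identify, accurately, where each stalls. Your diagnosis that the conjecture would follow from $\sigma(G)=\sigma(K_{\chi(G)})$ is correct and matches an open question the authors themselves raise in the conclusion. Your amplification idea is reasonable but, as you note, the lower bound $\sigma(\Phi(G))\ge\sigma(G)+1$ is exactly the kind of uniform control on $\sigma$ that nobody currently has. So the proposal is a sound survey of the terrain, but it does not close the gap; nor does the paper.
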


\section{Conclusion}

Theorem~\ref{th:lmlower} implies that for any graph $G$, $\tfrac13 \,
\sigma(G) \le \mathit{eq}(L(G)) \le \sigma(G)$. Applying
Proposition~\ref{prop:sigmaelbow}, we obtain $\tfrac13\, \elb(G) \le
\mathit{eq}(L(G)) \le 2\,\elb(G)$. If $\chi(G) \ge 3$,
Theorem~\ref{th:elbow} tells us that $$\tfrac 13\, (\lceil
\log_2\log_2\chi(G)\rceil+1)\, \le\, \mathit{eq}(L(G))\, \le\, 2\,
(\lceil \log_2\log_2\chi(G)\rceil+1).$$ As a consequence,
$\mathit{eq}(L(G))$ is unbounded, answering a question of
\cite{McC08}.  Further, as the chromatic number is unbounded for
triangle-free graphs, $\mathit{eq}(L(G))$ is not bounded above by
three; this disproves a conjecture in \cite{McC08+}. The tighter
Theorem \ref{thm:tighter} implies
that $$\log_2\left(\log_2\left(\chi(G)-3\mathit{eq}(L(G))\right)+3\mathit{eq}(L(G))+1\right)\leq
\mathit{eq}(L(G)).$$

Otherwise if $G$ is bipartite, then $\mathit{eq}(L(G)) = \sigma(G)=2$,
so the the previous inequalities still hold. Finally if $\chi(G)=1$,
the graph $L(G)$ has no vertices and the inequalities are meaningless.

There are several compelling problems that remain to be solved.  First
is an improved bound on $\sigma(K_n)$.  We believe that it is closer
to the lower bound than the upper bound, and we even think that the
lower bound might be tight.  The second question is that of bounding
$\sigma(G)$ in terms of $\mathit{eq}(L(G))$.  We suspect that they
differ by at most an additive constant for any graph.  Finally, we
would like to know if there is some graph $G$ for which
$\sigma(G)<\sigma(K_{\chi(G)})$.

\paragraph{Additional remarks}

After this draft was submitted, Andr\'as Gy\'arf\'as remarked that
$\chi(DS_n) \le \sigma(K_n) \le \chi(DS_n)+2$, where $\chi$ is the
chromatic number and $DS_n$ is the double-shift graph on $n$
vertices. In~\cite{FHRT92}, it was proved that $\chi(DS_n)=\log_2
\log_2 n + (1/2+o(1))  \log_2 \log_2 \log_2 n$, which directly improves
the upper bound in Corollary~\ref{cor:tight}.

We then realized that using~\cite[Theorem 3]{Spe72}, we can prove the
following even stronger statement: $L(K_n)$ can be covered with
$\log_2 \log_2 n + (1/2+o(1)) \log_2 \log_2 \log_2 n$
equivalence subgraphs, each of which is the analogue of an acyclic
orientation of $K_n$. This indicates that the last question of
Section~\ref{sec:eye} might have a positive answer.


\begin{thebibliography}{99}

\bibitem{Alo86} N.~Alon, \emph{Covering graphs with the minimum number
of equivalence relations}, Combinatorica {\bf 6} (1986),
201--206.

\bibitem{BK95} A.~Blokhuis and T.~Kloks, \emph{On the equivalence
covering number of splitgraphs}, Inform. Process. Lett. {\bf 54} (1995),
301--304.

\bibitem{Duc79} P.~Duchet, \emph{Repr\'esentations, noyaux en
th\'eorie des graphes et hypergraphes}, Th\`ese d'\'Etat, Universit\'e
Paris VI, 1979.


\bibitem{ES35} P.~Erd\H{o}s and G.~Szekeres, \emph{A combinatorial
theorem in geometry}, Compositio Math. {\bf 2} (1935), 463--470.

\bibitem{FHRT92} Z.~F\"uredi, P.~Hajnal, V.~R\"odl, and W.~Trotter,
  \emph{Interval orders and shift graphs}, in: Sets, Graphs and
  Numbers, Budapest, Colloq. Math. Soc. J\'anos Bolyai {\bf 60} (1992),
  297--313.

\bibitem{kriznesetril91} I.\ {K\v r\' i\v z} and J.\ {Ne\v set\v ril},
  {\em Chromatic Number of Hasse Diagrams, Eyebrows and Dimension},
  Order {\bf 8} (1991), {41--48}.



\bibitem{MP96} F.~Maffray and M.~Preissmann, \emph{On the
  NP-completeness of the $k$-colorability problem for triangle-free
  graphs}, Discrete Math. {\bf 162} (1996), 313--317.

\bibitem{McC08} C.~McClain, \emph{Edge colorings of graphs and
multigraphs}, Ph.D. Thesis, The Ohio State University, 2008.

\bibitem{McC08+} C.~McClain, \emph{The clique chromatic index of line
graphs}, manuscript, 2009.

\bibitem{Spe72} J.~Spencer, \emph{Minimal scrambling sets of simple
    orders}, Acta Math. Acad. Sci. Hungar. {\bf 22} (1972), 349--353.


\end{thebibliography}
\end{document}